\theoremstyle{plain}
\newtheorem{df}{Definition}[section]
\newtheorem{thm}[df]{Theorem}
\newtheorem{lem}[df]{Lemma}
\newtheorem{prop}[df]{Proposition}
\theoremstyle{definition}
\newtheorem{rem}{Remark}
\newcommand{\ind}{{\bf 1}}
\newcommand{\R}{\boldsymbol{R}}
\newcommand{\B}{\mathcal{B}}
\newcommand{\F}{\mathcal{F}}
\newcommand{\cH}{\mathcal{H}}
\newcommand{\cS}{\mathcal{S}}
\newcommand{\cP}{\mathcal{P}}
\newcommand{\as}{\mbox{{\it a}.{\it s}.}}
\newcommand{\cae}{\mbox{{\it a}.{\it e}.}}
\newcommand{\norm}[1]{\Vert #1 \Vert}
\newcommand{\sgn}[1]{{\rm sgn}(#1)}
\begin{document}
\title{The $L^p$ Cauchy sequence for one-dimensional BSDEs with linear growth generators}
\author{Yuki Izumi
\thanks{Graduate School of Mathematics, Kyushu University, Fukuoka, Japan.}
}
\authorfootnote{E-mail: {\tt ma210008@math.kyushu-u.ac.jp}}
\keywords{backward stochastic differential equation, $L^p$ solution, linear growth generator}
\amssubj{60H10}
\date{}
\maketitle

\begin{abstract}
In this paper, the existence of $L^p~(p>1)$ solutions for one-dimensional backward stochastic differential equations will be shown directly by proving that an approximation sequence is a Cauchy one in the $L^p$ sense. 
\end{abstract}

\section{Introduction}
In this paper, we consider the following one-dimensional backward stochastic differential equation (BSDE in short): 
\begin{align*}
\left\{
 \begin{array}{lr}
 -dY_t=f(t,Y_t,Z_t)dt-Z_t\cdot dW_t, & 0\leq t \leq T, \\
 Y_T=\xi, 
 \end{array}
\right.
\end{align*}
where $T>0$, $\xi$ is a random variable, $f$ is a real-valued random function, and $W$ is a $d$-dimensional Brownian motion with $W_0=0$. 
The function $f$ is called the generator. 
The equation above is also written in
\begin{align}\label{BSDE}
Y_t=\xi+\int_t^T f(s,Y_s,Z_s)ds-\int_t^T Z_s\cdot dW_s, \qquad 0 \leq t\leq T. 
\end{align}
A pair $(Y,Z)$ of adapted processes satisfying the equation is called a solution. 

As for $L^p~(p>1)$ solutions to the BSDE, El Karoui et al. \cite{Karoui} proved an existence and uniqueness result when $f$ is Lipschitz continuous and $\xi$ is in $L^p$ by using a fixed-point theorem. 
A natural question then arises whether the Lipschitz condition can be relaxed. 
On account of the standard forward SDEs, the linear growth condition of the generator seems to be a candidate for a weaker condition to guarantee the existence and the $L^p$-integrability of solutions. 
Hereinafter, we assume that $f$ is continuous and of linear growth order and $\xi$ is in $L^p$. 
In this case, the existence results were shown by Lepeltier and San Martin \cite{Lepeltier} for $p=2$, by Chen \cite{Chen} for $1<p\leq 2$ and after them by Fan and Jiang \cite{Fan} for general $p>1$. 
In these papers, a key role is played by an approximation sequence. 
When $1<p\leq 2$, the existence was obtained by proving that the sequence is a Cauchy one. 
When $p>2$, an $L^p$ solution was constructed by taking advantage of a stopping time argument. 
And, it remains open to prove the sequence to be a Cauchy one when $p>2$. 

This paper is organized as follows. 
In Section 2, a priori estimates are obtained by using It\^{o}'s formula. 
In Section 3, the approximation sequence is constructed. 
Then, it is proved that the sequence is a Cauchy one and converges to an $L^p$ solution to the BSDE \eqref{BSDE}. 

\section{Preliminaries}
\subsection{Notations}
Let $(W_t)_{0 \leq t \leq T}$ be a $d$-dimensional Brownian motion with $W_0=0$ defined on a complete probability space $(\Omega,\F,P)$, and $(\F_t)_{0 \leq t \leq T}$ be the natural filtration of the Brownian motion $W$ augmented by the $P$-null sets of $\F$.
Throughout the paper, we are working on with only one filtration $(\F_t)$ and for the sake of simplicity, we omit the prefix ``$(\F_t)$-''; for example, we just say ``adapted'' instead of ``$(\F_t)$-adapted''.
We denote by $\cP$ the predictable sub-$\sigma$-field of $\B([0,T])\otimes \F$, and let the generator $f$, which is defined on $[0,T]\times \Omega \times \R \times \R^d$, be $\cP\otimes\B(\R^{d+1})/\B(\R)$-measurable.
For a given $p>1$, we denote by $\cS^p$ the set of real-valued, continuous and adapted processes $(\eta_t)_{0\leq t \leq T}$ such that
\[
\norm{\eta}_{\cS^p}:=\left\{E\left[\sup_{0\leq t\leq T}|\eta_t|^p\right]\right\}^{1/p}<\infty.
\]
$\cH^p$ stands for the set of $\R^d$-valued predictable processes $(\zeta_t)_{0 \leq t \leq T}$ such that
\[
\norm{\zeta}_{\cH^p}:=\left\{E\left[\left( \int_0^T |\zeta_s|^2ds \right)^\frac{p}{2}\right]\right\}^{1/p}<\infty.
\]
We see that the following properties hold:
\begin{itemize}
\item If $\norm{\eta^n-\eta^m}_{\cS^p}\to 0$ as $n,m\to\infty$, then there exists a unique $\eta\in\cS^p$ such that $\norm{\eta^n-\eta}_{\cS^p}\to 0$ as $n\to\infty$,
\item if $\norm{\zeta^n-\zeta^m}_{\cH^p}\to 0$ as $n,m\to\infty$, then there exists a unique $\zeta\in\cH^p$ such that $\norm{\zeta^n-\zeta}_{\cH^p}\to 0$ as $n\to\infty$.
\end{itemize}

\subsection{Assumptions}
In this paper, we use the following assumptions (H1)-(H3):
\begin{description}
\item[\rm (H1)] There exists a positive constant $K$ and a non-negative predictable process $(g_t)_{0 \leq t \leq T}$ such that
                \begin{multline*}
                E\left[ \left( \int_0^T g_s ds \right)^p \right]<\infty, \quad
                |f(t,\omega,y,z)|\leq g_t(\omega)+K(|y|+|z|)\\
                \mbox{for any}~(t,\omega,y,z)\in [0,T]\times \Omega\times \R \times \R^d. 
                \end{multline*}
\item[\rm (H2)] For each $(t,\omega)\in [0,T]\times \Omega$, $f(t,\omega,y,z)$ is continuous in $(y,z)$. 
\item[\rm (H3)] $\xi\in L^p$, i.e., $E[|\xi|^p]<\infty$.
\end{description}

\begin{df}
A solution to the BSDE with the generator $f$ and the terminal value $\xi$ is a pair of continuous adapted processes $Y$ and predictable processes $Z$ such that
\begin{align*}
&\int_0^T \big\{ |f(s,Y_s,Z_s)|+|Z_s|^2 \big\}ds<\infty \quad \as
\end{align*}
and satisfies \eqref{BSDE}.
In particular, we call a solution $(Y,Z)\in \cS^p\times\cH^p$ an $L^p$ solution to the BSDE.
\end{df}

In the case $p>1$ and the generator is Lipschitz, the existence and uniqueness of $L^p$ solution is known (\cite{Karoui}).
\begin{thm}\label{EU}
Assume that $f$ is uniformly Lipschitz in $(y,z)$, i.e., there exists a positive constant $C$ such that
\begin{multline*}
|f(t,\omega,y_1,z_1)-f(t,\omega,y_2,z_2)|\leq C(|y_1-y_2|+|z_1-z_2|) \\
\mbox{for any}~~(t,\omega)\in [0,T]\times \Omega,~y_1,y_2\in\R,~z_1,z_2\in\R^d. 
\end{multline*}
And assume (H3) holds and \[E\left[  \left( \int_0^T |f(s,0,0)|ds \right)^p \right]<\infty. \]
Then, BSDE \eqref{BSDE} has a unique $L^p$ solution.
\end{thm}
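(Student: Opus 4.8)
The plan is to realize the solution as the unique fixed point of a contraction map on the Banach space $\cS^p\times\cH^p$, which is the classical Picard--Banach scheme behind the El Karoui et al.\ result. Given an input pair $(y,z)\in\cS^p\times\cH^p$, I would freeze it inside the generator and solve the driver-free equation
\[
Y_t=\xi+\int_t^T f(s,y_s,z_s)\,ds-\int_t^T Z_s\cdot dW_s,
\]
thereby defining a map $\Phi(y,z)=(Y,Z)$. Exhibiting $(Y,Z)$ explicitly is the first step: set $M_t:=E\big[\xi+\int_0^T f(s,y_s,z_s)\,ds\,\big|\,\F_t\big]$, apply the martingale representation theorem to obtain $Z\in\cH^p$ with $M_t=M_0+\int_0^t Z_s\cdot dW_s$, and then put $Y_t:=M_t-\int_0^t f(s,y_s,z_s)\,ds$. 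A direct check shows this $(Y,Z)$ solves the frozen equation, so $\Phi$ is well defined provided the relevant moments are finite.

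For well-definedness into $\cS^p\times\cH^p$, the key point is that the Lipschitz hypothesis yields the linear bound $|f(s,y_s,z_s)|\leq |f(s,0,0)|+C(|y_s|+|z_s|)$, so the assumption $E[(\int_0^T|f(s,0,0)|\,ds)^p]<\infty$ together with $(y,z)\in\cS^p\times\cH^p$ and $\xi\in L^p$ forces $\xi+\int_0^T|f(s,y_s,z_s)|\,ds\in L^p$. Doob's maximal inequality then controls $\norm{Y}_{\cS^p}$, while the Burkholder--Davis--Gundy inequality applied to the martingale $M$ controls $\norm{Z}_{\cH^p}$; these are precisely the a priori estimates of the type obtained by It\^{o}'s formula. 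Hence $\Phi$ maps $\cS^p\times\cH^p$ into itself.

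The heart of the argument, and what I expect to be the main obstacle, is the contraction estimate. For two inputs $(y^i,z^i)$ with images $(Y^i,Z^i)=\Phi(y^i,z^i)$, the differences $\delta Y:=Y^1-Y^2$ and $\delta Z:=Z^1-Z^2$ satisfy the same type of equation driven by $f(s,y^1_s,z^1_s)-f(s,y^2_s,z^2_s)$, which the Lipschitz condition bounds by $C(|\delta y_s|+|\delta z_s|)$. The difficulty is that a naive estimate on the full interval $[0,T]$ gives a contraction constant that need not fall below $1$, and for $p\neq 2$ one cannot simply apply It\^{o}'s formula to $|\delta Y|^2$ as in the $L^2$ theory. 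I would circumvent this either by introducing the weighted quantities $E[\int_0^T e^{\beta s}\,|\cdot|\,ds]$ and choosing $\beta$ large enough to absorb $C$, or, equivalently, by first establishing the contraction on a short interval $[T-h,T]$ with $h$ depending only on $C$ and $p$ and then pasting finitely many such intervals together. Once a genuine contraction is in hand, the Banach fixed-point theorem delivers a unique fixed point $(Y,Z)\in\cS^p\times\cH^p$, which is by construction the unique $L^p$ solution of \eqref{BSDE}.
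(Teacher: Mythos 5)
Your scheme is correct and is essentially the canonical argument: the paper offers no proof of Theorem~\ref{EU} at all, quoting it from El Karoui, Peng and Quenez \cite{Karoui}, and the Picard--Banach contraction you outline (martingale representation plus Doob and BDG for well-definedness of $\Phi$, then a contraction obtained by small-interval pasting --- the reliable variant for general $p>1$, the $e^{\beta s}$-weighting being routine only for $p=2$) is exactly the proof behind that citation. It is also worth noting that your argument uses only $\int_0^T|f(s,0,0)|\,ds\in L^p$ and never the stronger condition \eqref{rem1}, which substantiates the paper's remark that the hypothesis of \cite{Karoui} can be weakened in precisely this way.
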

It is also known (\cite{Karoui}) that
\begin{thm}\label{CT}
For $i=1,2$, let $f^i$ be uniformly Lipschitz in $(y,z)$, $\xi^i$ satisfy (H3) and
\[
E\left[ \left( \int_0^T |f^i(s,0,0)|ds \right)^p \right]<\infty. 
\]
In addition, assume that each $(Y^i,Z^i)$ is the $L^p$ solution to the BSDE with respect to $(f^i,\xi^i)$.
Then, $\xi^1 \geq \xi^2~\as$ and $f^1(t,Y^2_t,Z^2_t)\geq f^2(t,Y^2_t,Z^2_t)~dt\times dP\mbox{-\cae}$ imply $Y^1 \geq Y^2~\as$.
\end{thm}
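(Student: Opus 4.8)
The plan is to prove the comparison result by linearizing the difference of the two equations and representing the difference of the solutions through a strictly positive integrating factor, so that its sign is inherited from the sign of the data.

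First I would set $\hat Y=Y^1-Y^2$ and $\hat Z=Z^1-Z^2$. Subtracting the two instances of \eqref{BSDE} gives
\begin{align*}
\hat Y_t=(\xi^1-\xi^2)+\int_t^T\big\{f^1(s,Y^1_s,Z^1_s)-f^2(s,Y^2_s,Z^2_s)\big\}ds-\int_t^T\hat Z_s\cdot dW_s.
\end{align*}
I would then split the generator difference as
\begin{align*}
f^1(s,Y^1_s,Z^1_s)-f^2(s,Y^2_s,Z^2_s)=\big\{f^1(s,Y^1_s,Z^1_s)-f^1(s,Y^2_s,Z^2_s)\big\}+\varphi_s,
\end{align*}
where $\varphi_s:=f^1(s,Y^2_s,Z^2_s)-f^2(s,Y^2_s,Z^2_s)\geq 0$ by hypothesis. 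Using the Lipschitz property of $f^1$ and interpolating first in $y$ and then coordinate-by-coordinate in $z$, I would write the first bracket as $a_s\hat Y_s+b_s\cdot\hat Z_s$, with
\begin{align*}
a_s=\frac{f^1(s,Y^1_s,Z^1_s)-f^1(s,Y^2_s,Z^1_s)}{\hat Y_s}\ind_{\{\hat Y_s\neq 0\}},
\end{align*}
and $b_s=(b^1_s,\dots,b^d_s)$ defined by the analogous difference quotients along a chain of intermediate points; both $a$ and $b$ are predictable and bounded by the Lipschitz constant $C$.

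Next I would introduce the integrating factor
\begin{align*}
\Gamma_t=\exp\Big(\int_0^t a_s\,ds+\int_0^t b_s\cdot dW_s-\tfrac12\int_0^t|b_s|^2ds\Big),
\end{align*}
which satisfies $d\Gamma_t=\Gamma_t(a_t\,dt+b_t\cdot dW_t)$ and, since $a$ and $b$ are bounded, has finite moments of every order. Applying It\^o's formula to $\Gamma_t\hat Y_t$, the $a_t\hat Y_t+b_t\cdot\hat Z_t$ contribution cancels against the drift coming from $d\Gamma_t$ and the quadratic covariation, leaving
\begin{align*}
d(\Gamma_t\hat Y_t)=-\Gamma_t\varphi_t\,dt+\Gamma_t(\hat Z_t+\hat Y_t b_t)\cdot dW_t.
\end{align*}
Integrating from $t$ to $T$ and taking conditional expectation given $\F_t$ would yield
\begin{align*}
\Gamma_t\hat Y_t=E\Big[\Gamma_T(\xi^1-\xi^2)+\int_t^T\Gamma_s\varphi_s\,ds\,\Big|\,\F_t\Big].
\end{align*}
Since $\Gamma>0$, $\xi^1-\xi^2\geq0$ and $\varphi\geq0$, the right-hand side is nonnegative, whence $\hat Y_t\geq0$ for each $t$, and since $Y^1,Y^2\in\cS^p$ are continuous, $Y^1\geq Y^2$ a.s.

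The main obstacle I expect is justifying that the stochastic integral $\int_0^\cdot\Gamma_s(\hat Z_s+\hat Y_s b_s)\cdot dW_s$ is a genuine martingale, so that it disappears under the conditional expectation. I would handle this by localizing with a sequence of stopping times that reduce it to a martingale, deriving the identity up to each stopping time, and then passing to the limit. This last passage is where the $L^p$ framework enters: using $Y^i\in\cS^p$ and $Z^i\in\cH^p$, the boundedness of $a$ and $b$, the finiteness of all moments of $\Gamma$, and the H\"older and Burkholder--Davis--Gundy inequalities, one controls the integrand in a sufficiently integrable class to exchange the limit with the expectation and recover the representation above.
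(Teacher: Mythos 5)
Your proposal is correct and is essentially the argument the paper relies on: the paper states Theorem \ref{CT} without proof, citing El Karoui, Peng and Quenez \cite{Karoui}, and the proof there is exactly your linearization via bounded predictable coefficients $a,b$ and the exponential integrating factor $\Gamma$, with the martingale property of $\int_0^\cdot\Gamma_s(\hat Z_s+\hat Y_s b_s)\cdot dW_s$ secured by H\"older, the finite moments of $\Gamma$, and the BDG inequality. Your handling of the $dt$-integrability of $\varphi$ through $\int_0^T|\varphi_s|\,ds\in L^p$ also shows why the weakened assumption $E[(\int_0^T|f^i(s,0,0)|\,ds)^p]<\infty$ suffices, in line with the paper's Remark.
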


\begin{rem}
In \cite{Karoui}, the assertion of Theorem \ref{EU} and \ref{CT} are stated under the assumptions like
\begin{align}
E\left[ \left( \int_0^T |f(s,0,0)|^2ds \right)^\frac{p}{2} \right]<\infty, \label{rem1}
\end{align}
which is stronger than the ones in the theorems. 
Observing the proof in \cite{Karoui} carefully, we can weaken the assumption \eqref{rem1} to the one as we used.
\end{rem}

\subsection{A priori estimates}
We prepare the following estimations which play a key role in the observation of this paper, 
by generalizing the ones in \cite{Chen} used by Chen for specified solutions.
\begin{prop}\label{APE}
\begin{description}
\item[\rm (i)] Let $p>1$. If $(Y,Z)$ is an $L^p$ solution to the BSDE \eqref{BSDE}, then there exists a positive constant $C_p$ depending only on $p$ such that
\begin{align*}
&\norm{Y}^p_{\cS^p}\leq C_p E\left[ |\xi|^p+\int_0^T |Y_s|^{p-1}|f(s,Y_s,Z_s)|ds \right],\\[.5em]
&\norm{Z}^p_{\cH^p}\leq C_p \left\{ E\left[ |\xi|^p+\left( \int_0^T |Y_s||f(s,Y_s,Z_s)|ds \right)^{\frac{p}{2}} \right]+\norm{Y}^p_{\cS^p} \right\}.
\end{align*}
Moreover, if $f$ satisfies (H1), then
\begin{align*}
\norm{Z}^p_{\cH^p}&\leq C (1+\norm{Y}^\frac{p}{2}_{\cS^p}+\norm{Y}^p_{\cS^p}),
\end{align*}
where $C$ is a positive constant which depends only on $p,K,T,E[|\xi|^p]$ and $E[(\int_0^T g_s ds)^p]$. 
\item[\rm (ii)] Let $p>1$. If $(Y^i,Z^i)$ is an $L^p$ solution to the BSDE with respect to $(f^i,\xi^i)$, $i=1,2$, respectively, then there exists a positive constant $C_p$ depending only on $p$ such that
\begin{align*}
& \norm{\delta Y}^p_{\cS^p} \leq C_p E\left[ |\delta Y_T|^p+\int_0^T |\delta Y_s|^{p-1}|\delta f_s|ds \right], \\
& \norm{\delta Z}^p_{\cH^p} \leq C_p \left\{ E\left[ |\delta Y_T|^p+\left( \int_0^T |\delta Y_s||\delta f_s|ds \right)^\frac{p}{2} \right]+\norm{\delta Y}^p_{\cS^p} \right\}, 
\end{align*}
where $\delta Y:=Y^1-Y^2,~\delta Z:=Z^1-Z^2,~\delta f_s:=f^1(s,Y^1_s,Z^1_s)-f^2(s,Y^2_s,Z^2_s)$.
\end{description}
\end{prop}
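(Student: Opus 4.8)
The plan is to run It\^o's formula for the solution $Y$ of \eqref{BSDE}, whose differential is $dY_t=-f(t,Y_t,Z_t)\,dt+Z_t\cdot dW_t$, against the two convex functions $y\mapsto|y|^p$ and $y\mapsto|y|^2$, and then to pass from the resulting identities to the stated estimates by combining the Burkholder--Davis--Gundy (BDG) inequality with Young's inequality. Applying It\^o's formula to $|Y_t|^p$ produces, for each $t$, the fundamental identity
\begin{multline*}
|Y_t|^p+\tfrac12 p(p-1)\int_t^T|Y_s|^{p-2}|Z_s|^2\,ds \\
=|\xi|^p+p\int_t^T|Y_s|^{p-1}\sgn{Y_s}f(s,Y_s,Z_s)\,ds-p M_t,
\end{multline*}
where $M_t:=\int_t^T|Y_s|^{p-1}\sgn{Y_s}Z_s\cdot dW_s$. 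Since $|y|^p$ fails to be $C^2$ at the origin when $1<p<2$, I would first derive this identity with $|y|^p$ replaced by the smooth approximation $u_\varepsilon(y)=(y^2+\varepsilon)^{p/2}$, whose second derivative is nonnegative, and then let $\varepsilon\downarrow 0$, controlling the curvature term from below by Fatou's lemma and the first-order and boundary terms by dominated convergence.

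For the $\cS^p$-estimate of $Y$ I would discard the nonnegative curvature term, take the supremum over $t$ and then expectations. Writing $\int_t^T=\int_0^T-\int_0^t$ bounds $\sup_t|M_t|$ by twice the running maximum of a (local) martingale, to which the BDG inequality applies and yields
\begin{align*}
E\Big[\sup_{0\leq t\leq T}|M_t|\Big]
&\leq C\,E\Big[\Big(\int_0^T|Y_s|^{2(p-1)}|Z_s|^2\,ds\Big)^{1/2}\Big] \\
&\leq C\,E\Big[\big(\sup_s|Y_s|^p\big)^{1/2}\Big(\int_0^T|Y_s|^{p-2}|Z_s|^2\,ds\Big)^{1/2}\Big].
\end{align*}
Young's inequality then splits the right-hand side into $\tfrac{1}{2\alpha}\norm{Y}_{\cS^p}^p$ plus a multiple of $E[\int_0^T|Y_s|^{p-2}|Z_s|^2\,ds]$; the latter expectation is controlled by the fundamental identity taken at $t=0$ (the martingale there having zero mean). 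Choosing $\alpha$ large enough to absorb the $\norm{Y}_{\cS^p}^p$ term into the left-hand side gives the first estimate. This absorption is legitimate precisely because $(Y,Z)\in\cS^p\times\cH^p$ guarantees $\norm{Y}_{\cS^p}^p<\infty$ a priori.

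For the $\cH^p$-estimate I would instead use It\^o's formula for $|Y_t|^2$ (no regularization needed), which gives $\int_0^T|Z_s|^2\,ds=|\xi|^2-|Y_0|^2+2\int_0^T Y_s f(s,Y_s,Z_s)\,ds-2\int_0^T Y_sZ_s\cdot dW_s$. Raising to the power $p/2$, taking expectations, and applying BDG to the stochastic integral bounds $E[|\int_0^T Y_sZ_s\cdot dW_s|^{p/2}]$ by $C\,E[(\sup_s|Y_s|^2\int_0^T|Z_s|^2\,ds)^{p/4}]$; Young's inequality again separates a $\norm{Y}_{\cS^p}^p$ term from $\tfrac{\alpha}{2}\norm{Z}_{\cH^p}^p$, and the latter is absorbed on the left, producing the second estimate. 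The third estimate follows by inserting the linear-growth bound $|f|\leq g_s+K(|Y_s|+|Z_s|)$ from (H1) into the $\int_0^T|Y_s||f|\,ds$ term: the contributions of $g_s$ and $K|Y_s|$ are handled by Cauchy--Schwarz and Young, generating the $\norm{Y}_{\cS^p}^{p/2}$ and $\norm{Y}_{\cS^p}^p$ terms, while the $K|Y_s||Z_s|$ contribution is bounded by $\sup_s|Y_s|\,T^{1/2}(\int_0^T|Z_s|^2\,ds)^{1/2}$ and, after one more Young split, absorbed into $\norm{Z}_{\cH^p}^p$; the constant $C$ then collects $E[|\xi|^p]$ and $E[(\int_0^T g_s\,ds)^p]$ along with $p,K,T$. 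Finally, part (ii) is obtained by applying the first two estimates of part (i) verbatim to the difference $\delta Y$, which solves \eqref{BSDE} with terminal value $\delta Y_T$ and driver $\delta f_s$.

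The main obstacle I anticipate is the regularization step for $1<p<2$: one must justify passing to the limit $\varepsilon\downarrow 0$ in the It\^o identity for $u_\varepsilon(Y)$ while retaining the curvature term, and one must ensure the stochastic integrals are true (not merely local) martingales, which I would arrange by a stopping-time localization removed at the end via monotone or dominated convergence. Everything else is bookkeeping with BDG and Young, the one genuinely delicate point being that each absorption argument relies on the finiteness $\norm{Y}_{\cS^p}^p+\norm{Z}_{\cH^p}^p<\infty$ that the $L^p$-solution hypothesis provides.
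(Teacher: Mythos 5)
Your proposal is correct and follows essentially the same route as the paper: It\^o's formula applied to $|y|^p$ (with the smoothing $u_\varepsilon(y)=(y^2+\varepsilon)^{p/2}$ for $1<p<2$, which is exactly the content of the cited Briand et al.\ lemma the paper invokes, indicator $\ind_{\{Y_s\neq 0\}}$ included) and to $|y|^2$, followed by BDG plus the Young split $2ab\leq \varepsilon a^2+\varepsilon^{-1}b^2$ with absorption justified by the a priori finiteness of $\norm{Y}_{\cS^p}$ and $\norm{Z}_{\cH^p}$, the same three-way decomposition of $\int_0^T|Y_s||f|\,ds$ under (H1), and part (ii) deduced from (i) by viewing $(\delta Y,\delta Z)$ as solving a BSDE with terminal value $\delta Y_T$ and a generator whose value along the solution is $\delta f_s$ (the paper's shifted $\tilde f$). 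The only cosmetic difference is that the paper verifies the stochastic integral is a true martingale directly via a H\"older bound using $(Y,Z)\in\cS^p\times\cH^p$, whereas you propose stopping-time localization; both work.
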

\begin{proof}
The assertion (ii) follows from (i). 
Namely, put $\tilde{f}(t,y,z)=f^1(t,Y^2_t+y,Z^2_t+z)-f^2(t,Y^2_t,Z^2_t)$. 
Then, $\delta f_t=\tilde{f}(t,\delta Y_t,\delta Z_t)$ and the pair $(\delta Y,\delta Z)\in \cS^p \times \cH^p$ satisfies \begin{align*}
\delta Y_t &= \delta Y_T + \int_t^T \tilde{f}(s,\delta Y_s,\delta Z_s)ds - \int_t^T \delta Z_s \cdot dW_s, \quad 0\leq t \leq T. 
\end{align*}
Thus, we only prove (i). 
\par
Let $p>1$. 
We first estimate $Y$. 
As an elementary application of It\^{o}'s formula, we obtain
\begin{multline}
|Y_t|^p+\frac{p(p-1)}{2}\int_t^T |Y_s|^{p-2}\tilde{\ind}(Y_s)|Z_s|^2ds\\
=|\xi|^p+p\int_t^T \sgn{Y_s}|Y_s|^{p-1}f(s,Y_s,Z_s)ds \\
-p\int_t^T \sgn{Y_s}|Y_s|^{p-1}Z_s\cdot dW_s,\qquad 0\leq t \leq T, \label{yp}
\end{multline}
where
\begin{align*}
&\tilde{\ind}(y):=\left\{
 \begin{array}{ll}
 \ind_{\{y\neq 0\}},&1<p<2\\[.5em]
 1,&2\leq p
 \end{array}
\right.,
\qquad
\sgn{x}:=\left\{
 \begin{array}{rl}
  -1,&x<0\\
  0,&x=0\\
  1,&x>0
 \end{array}
 \right..
\end{align*}
See also \cite[Lemma 2.2]{Briand}. 
Hence, we get
\begin{multline}\label{eq2.2}
\sup_{0\leq t \leq T}|Y_t|^p \leq |\xi|^p+p\int_0^T |Y_s|^{p-1}|f(s,Y_s,Z_s)|ds \\
+2p\sup_{0\leq t \leq T}\left| \int_0^t\sgn{Y_s}|Y_s|^{p-1}Z_s\cdot dW_s \right|.
\end{multline}
By the Burkholder-Davis-Gundy inequality (the BDG inequality in short), there exists a positive constant $C_1$ such that
\begin{align}
&2pE\left[ \sup_{0\leq t \leq T}\left| \int_0^t\sgn{Y_s}|Y_s|^{p-1}Z_s\cdot dW_s \right| \right]\nonumber\\
&\qquad \leq 2pC_1E\left[ \left( \int_0^T |Y_s|^{2p-2}\tilde{\ind}(Y_s)|Z_s|^2ds \right)^{\frac{1}{2}} \right] \nonumber\\
&\qquad \leq 2pC_1E\left[ \sup_{0\leq t \leq T}|Y_t|^{\frac{p}{2}} \left( \int_0^T |Y_s|^{p-2}\tilde{\ind}(Y_s)|Z_s|^2 ds\right)^{\frac{1}{2}} \right] \nonumber\\
&\qquad \leq \frac{1}{2}E\left[ \sup_{0\leq t \leq T}|Y_t|^p \right]+2p^2C_1^2E\left[ \int_0^T |Y_s|^{p-2}\tilde{\ind}(Y_s)|Z_s|^2ds \right], \label{eq2.4}
\end{align}
where, to see the third inequality above, we have used the inequality
\begin{align*}\label{epsilon}
2ab\leq \varepsilon a^2+\varepsilon^{-1}b^2,\qquad \varepsilon>0,~~a,b\geq 0 \tag{$*$}
\end{align*}
with $\varepsilon=1/2$.

By the H\"{o}lder inequality, we have
\begin{align*}
&E\left[ \left( \int_0^T |Y_s|^{2p-2}\tilde{\ind}(Y_s)|Z_s|^2ds \right)^{\frac{1}{2}} \right]\\
&\qquad \leq E\left[ \sup_{0\leq t \leq T}|Y_t|^{p-1} \left( \int_0^T |Z_s|^2ds \right)^\frac{1}{2} \right]\\
&\qquad \leq \left\{ E\left[ \sup_{0\leq t \leq T}|Y_t|^p \right] \right\}^{1-\frac{1}{p}} \left\{ E\left[ \left( \int_0^T |Z_s|^2ds \right)^\frac{p}{2} \right] \right\}^\frac{1}{p} <\infty.
\end{align*}
Thus, $(\int_0^t\sgn{Y_s}|Y_s|^{p-1}Z_s\cdot dW_s)_{0\leq t \leq T}$ is a martingale. Then, taking the expectations of \eqref{yp}, we get
\begin{multline}
\frac{p(p-1)}{2}E\left[ \int_0^T |Y_s|^{p-2}\tilde{\ind}(Y_s)|Z_s|^2ds \right]\\ 
\leq E\left[ |\xi|^p+p\int_0^T |Y_s|^{p-1}|f(s,Y_s,Z_s)|ds \right].\label{eq2.5}
\end{multline}
Then \eqref{eq2.2}, \eqref{eq2.4} and \eqref{eq2.5} yield the estimation of $Y$. 

Next, we estimate $Z$. 
By \eqref{yp} with $p=2$, we deduce that
\begin{align*}
\int_0^T |Z_s|^2ds&\leq |\xi|^2+2\int_0^T |Y_s||f(s,Y_s,Z_s)|ds+2\sup_{0\leq t \leq T}\left| \int_0^t Y_sZ_s\cdot dW_s \right|.
\end{align*}
Hence, it follows that
\begin{multline}\label{eq2.6}
\left( \int_0^T |Z_s|^2ds \right)^\frac{p}{2} \\
\leq C_2 \left\{ |\xi|^p  + \left( \int_0^T |Y_s||f(s,Y_s,Z_s)|ds \right)^\frac{p}{2}  + \sup_{0\leq t \leq T}\left| \int_0^t Y_sZ_s\cdot dW_s \right|^\frac{p}{2} \right\}, 
\end{multline}
where $C_2$ is a positive constant depending only on $p$. 
By the BDG inequality, there exists a positive constant $C_3$ depending only on $p$ such that
\begin{align}
&C_2E\left[ \sup_{0\leq t \leq T}\left| \int_0^t Y_sZ_s\cdot dW_s \right|^\frac{p}{2} \right] \nonumber\\
&\qquad \leq C_2 C_3 E\left[ \left( \int_0^T |Y_s|^2|Z_s|^2ds \right)^\frac{p}{4} \right] \nonumber\\
&\qquad \leq C_2C_3 E\left[ \sup_{0 \leq t \leq T}|Y_t|^\frac{p}{2} \left( \int_0^T |Z_s|^2ds \right)^\frac{p}{4} \right] \nonumber\\
&\qquad \leq 2C_2^2C_3^2 E\left[ \sup_{0 \leq t \leq T}|Y_t|^p \right]+\frac{1}{2}E\left[ \left( \int_0^T |Z_s|^2ds \right)^\frac{p}{2} \right], \label{eq2.7}
\end{align}
where, to see the third inequality above, we have used \eqref{epsilon} again with $\varepsilon=1/2$. 
Then, we get the second estimation from \eqref{eq2.6} and \eqref{eq2.7}. 

We finally show the last assertion of (i). 
To do this, it is sufficient to estimate the second term of the estimation with respect to $Z$. 
By (H1) and the H\"{o}lder inequality, there exists positive constants $C_{p,K},C_{p,K,T}$ and $C'_{p,K,T}$ which depend only on the subscripts such that
\begin{align*}
&E\left[ \left( \int_0^T |Y_s||f(s,Y_s,Z_s)|ds \right)^\frac{p}{2} \right] \\
&\leq C_{p,K} \left\{ E\left[ \left( \int_0^T |Y_s|g_s ds \right)^\frac{p}{2} \right] \right.\\
& \hspace{4.5em}\left. + E\left[ \left( \int_0^T |Y_s|^2ds \right)^\frac{p}{2} \right] + E\left[ \left( \int_0^T |Y_s||Z_s|ds \right)^\frac{p}{2} \right] \right\} \\
&\leq C_{p,K,T} \left\{ \norm{Y}^\frac{p}{2}_{\cS^p} \left\{ E\left[ \left( \int_0^T g_sds \right)^p \right] \right\}^\frac{1}{2} \right.\\
& \hspace{5.5em}\left. + \norm{Y}^p_{\cS^p} + E\left[ \left( \int_0^T \left( \varepsilon^{-1}|Y_s|^2 + \varepsilon|Z_s|^2 \right)ds \right)^\frac{p}{2} \right] \right\} \\
&\leq C'_{p,K,T} \left( \norm{Y}^\frac{p}{2}_{\cS^p} \left\{ E\left[ \left( \int_0^T g_sds \right)^p \right] \right\}^\frac{1}{2} + \varepsilon^{-\frac{p}{2}}\norm{Y}^p_{\cS^p} + \varepsilon^\frac{p}{2} \norm{Z}^p_{\cH^p} \right), 
\end{align*}
where, to see the second inequality above, we have used \eqref{epsilon} with $C_pC'_{p,K,T}\varepsilon^\frac{p}{2}=1/2$. 
Then, we obtain the desired estimation. 
\end{proof}

\section{Existence of an $L^p$ solution}
\subsection{Approximation of linear growth functions}
According to \cite{Lepeltier}, linear growth functions can be approximated by Lipschitz functions.
Precisely speaking, when a generator $f$ satisfies (H1) and (H2), 
\begin{align}
f_n(t,y,z)&:=\inf_{(u,v)\in \R^{d+1}}\{f(t,u,v)+n(|y-u|+|z-v|)\},\qquad n \geq K \label{kinji}
\end{align}
is a Lipschitz function and approximates the linear growth function $f$, where $K$ is a constant appeared in (H1). 

\begin{lem}\label{le3.1}
Assume (H1) and (H2) hold. Then, \eqref{kinji} is well-defined for $n\geq K$ and the following properties i)-iv) hold:
\begin{description}
\item[\rm i)] $|f_n(t,\omega,y,z)|\leq g_t(\omega)+K(|y|+|z|) ~~\mbox{for any}~~ (t,\omega,y,z)\in [0,T]\times \Omega \times \R \times \R^d$,
\item[\rm ii)] $f_n \leq f_{n+1} \leq f,\quad n \geq K$,
\item[\rm iii)] $|f_n(t,\omega,y_1,z_1)-f_n(t,\omega,y_2,z_2)|\leq n(|y_1-y_2|+|z_1-z_2|) ~~\mbox{for any}~~ (t,\omega)\in [0,T]\times \Omega$,
\item[\rm iv)] if $(y_n,z_n)\to (y,z)$, then $f_n(t,\omega,y_n,z_n)\to f(t,\omega,y,z)  ~~\mbox{for any}~~ (t,\omega)\in [0,T]\times \Omega$.
\end{description}
\end{lem}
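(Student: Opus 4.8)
The plan is to treat the four properties in an order that lets each build on the previous, isolating the well-definedness and the lower bound of i) as the common starting point, since both rest on the linear growth assumption (H1). First I would record that, for any $(u,v)\in\R^{d+1}$, the triangle inequalities $|u|\le|y|+|y-u|$ and $|v|\le|z|+|z-v|$ together with (H1) give
\begin{align*}
f(t,\omega,u,v)+n(|y-u|+|z-v|)\ge -g_t(\omega)-K(|y|+|z|)+(n-K)(|y-u|+|z-v|).
\end{align*}
Because $n\ge K$ the last term is nonnegative, so the infimum in \eqref{kinji} is bounded below by $-g_t(\omega)-K(|y|+|z|)$ and is therefore finite; this already yields the lower bound in i). The matching upper bound follows by taking $(u,v)=(y,z)$ in \eqref{kinji}, which gives $f_n(t,\omega,y,z)\le f(t,\omega,y,z)\le g_t(\omega)+K(|y|+|z|)$ by (H1). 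Combining the two bounds proves i), and the inequality $f_n\le f$ is the first half of ii).

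The monotonicity $f_n\le f_{n+1}$ and the Lipschitz estimate iii) are both soft consequences of the inf-convolution structure. For ii), the integrand for index $n+1$ dominates that for index $n$ pointwise in $(u,v)$, since $|y-u|+|z-v|\ge0$ and $n+1>n$, so taking infima preserves the inequality. For iii), I would fix $(u,v)$ and apply the triangle inequality in the form
\begin{align*}
|y_1-u|+|z_1-v|\le|y_2-u|+|z_2-v|+(|y_1-y_2|+|z_1-z_2|),
\end{align*}
add $f(t,\omega,u,v)$ to both sides, and take the infimum over $(u,v)$ to obtain $f_n(t,\omega,y_1,z_1)\le f_n(t,\omega,y_2,z_2)+n(|y_1-y_2|+|z_1-z_2|)$; the symmetric argument then gives iii).

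The substance of the lemma is iv), and this is where I expect the real work. Fix $(t,\omega)$ and a sequence $(y_n,z_n)\to(y,z)$. One inequality is immediate: by ii) and the continuity (H2) of $f$,
\begin{align*}
\limsup_{n\to\infty} f_n(t,\omega,y_n,z_n)\le\limsup_{n\to\infty} f(t,\omega,y_n,z_n)=f(t,\omega,y,z).
\end{align*}
For the reverse inequality I would, for each $n$, choose a near-minimizer $(u_n,v_n)$ with
\begin{align*}
f(t,\omega,u_n,v_n)+n(|y_n-u_n|+|z_n-v_n|)\le f_n(t,\omega,y_n,z_n)+\tfrac1n.
\end{align*}
The crucial point is to show $(u_n,v_n)\to(y,z)$. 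Using i), which bounds the right-hand side uniformly since $(y_n,z_n)$ is bounded, together with the lower bound $f(t,\omega,u_n,v_n)\ge-g_t(\omega)-K(|u_n|+|v_n|)$ from (H1), and absorbing $|u_n|,|v_n|$ via the same triangle inequality as above, I would arrive at an estimate of the form
\begin{align*}
(n-K)(|y_n-u_n|+|z_n-v_n|)\le M,
\end{align*}
with $M$ independent of $n$. Since $n\to\infty$, this forces $|y_n-u_n|+|z_n-v_n|\to0$, hence $(u_n,v_n)\to(y,z)$. Then continuity (H2) gives $f(t,\omega,u_n,v_n)\to f(t,\omega,y,z)$, and discarding the nonnegative penalty term in the displayed choice yields $f_n(t,\omega,y_n,z_n)\ge f(t,\omega,u_n,v_n)-\tfrac1n$, so that $\liminf_n f_n(t,\omega,y_n,z_n)\ge f(t,\omega,y,z)$. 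The two inequalities combine to prove iv). The main obstacle throughout is the control of the minimizing pairs in iv): it is precisely the interplay of $n\ge K$ with the linear growth bound (H1) that keeps $(n-K)(|y_n-u_n|+|z_n-v_n|)$ bounded and lets the penalty term pin the minimizers down to $(y,z)$.
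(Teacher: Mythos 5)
Your proof is correct. The paper itself gives no proof of this lemma---it states it as a known result, citing Lepeltier and San Martin \cite{Lepeltier}---and your argument is exactly the standard inf-convolution proof from that reference: the $(n-K)$ lower bound establishing well-definedness and i), pointwise domination of the integrands for ii), the triangle inequality for iii), and near-minimizers pinned to $(y,z)$ by the uniform bound on $(n-K)(|y_n-u_n|+|z_n-v_n|)$ for iv).
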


\subsection{Approximation of a solution}
Let $p>1$ and assumptions (H1)-(H3) hold. 
We consider the following one-dimensional BSDEs:
\begin{align}
&Y^n_t=\xi+\int_t^T f_n(s,Y^n_s,Z^n_s)ds-\int_t^T Z^n_s\cdot dW_s, \qquad n\geq K, \label{ABSDE}\\
&U_t=\xi+\int_t^T \{g_s+K(|U_s|+|V_s|)\}ds-\int_t^T V_s\cdot dW_s. \nonumber
\end{align}
Theorem \ref{EU} assures the existence and uniqueness of $L^p$ solution to these BSDEs. 
Thus, $(Y^n,Z^n)$ and $(U,V)$ are well-defined for $n \geq K$.
Moreover, by Theorem \ref{CT} and Lemma \ref{le3.1}-ii), we have
\begin{align}\label{eq3.2}
Y^n \leq Y^{n+1} \leq U, \qquad n\geq K.
\end{align}

\begin{thm}\label{pr3.3}
$(Y^n,Z^n)$ is a Cauchy sequence in $\cS^p \times \cH^p$.
\end{thm}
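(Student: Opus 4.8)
The plan is to combine the monotonicity \eqref{eq3.2} with the a priori estimates of Proposition~\ref{APE} and to reduce the whole statement to the convergence of $(Y^n)$ in $\cS^p$. First I would fix $n>m\ge K$ and put $\delta Y=Y^n-Y^m$, $\delta Z=Z^n-Z^m$ and $\delta f_s=f_n(s,Y^n_s,Z^n_s)-f_m(s,Y^m_s,Z^m_s)$. Since both equations in \eqref{ABSDE} carry the same terminal value $\xi$, we have $\delta Y_T=0$. By Theorem~\ref{CT} together with Lemma~\ref{le3.1}-ii) one has $Y^K\le Y^m\le Y^n\le U$, so that $0\le\delta Y_s\le U_s-Y^K_s=:D_s$ with $D\in\cS^p$, and $\sup_{n\ge K}\norm{Y^n}_{\cS^p}\le\norm{Y^K}_{\cS^p}+\norm{U}_{\cS^p}<\infty$. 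Since via Lemma~\ref{le3.1}-i) each $f_n$ obeys (H1) with the same $g$ and $K$, the last assertion of Proposition~\ref{APE}(i) applies with a common constant and gives $\sup_{n\ge K}\norm{Z^n}_{\cH^p}<\infty$. These uniform bounds and the monotonicity in $n$ produce a limit process $Y$ with $Y^n_t\uparrow Y_t$ pointwise.

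Next I would note that the $\cH^p$-estimate is subordinate to the $\cS^p$-one. Proposition~\ref{APE}(ii) with $\delta Y_T=0$ gives
\[
\norm{\delta Z}^p_{\cH^p}\le C_p\left(E\!\left[\left(\int_0^T|\delta Y_s||\delta f_s|\,ds\right)^{p/2}\right]+\norm{\delta Y}^p_{\cS^p}\right),
\]
and bounding the first expectation through $\int_0^T|\delta Y_s||\delta f_s|\,ds\le\sup_{t}|\delta Y_t|\int_0^T|\delta f_s|\,ds$ and the Cauchy--Schwarz inequality yields the factor $\norm{\delta Y}^{p/2}_{\cS^p}\{E[(\int_0^T|\delta f_s|\,ds)^p]\}^{1/2}$. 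By Lemma~\ref{le3.1}-i), $\int_0^T|\delta f_s|\,ds\le\int_0^T\{2g_s+K(|Y^n_s|+|Y^m_s|+|Z^n_s|+|Z^m_s|)\}\,ds$, whose $p$-th moment is bounded uniformly in $n,m$ by (H1) and the uniform bounds above. Hence $\norm{\delta Y}_{\cS^p}\to0$ forces $\norm{\delta Z}_{\cH^p}\to0$, and it suffices to prove that $(Y^n)$ is Cauchy in $\cS^p$.

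For the $\cS^p$-convergence I would start from the first inequality in Proposition~\ref{APE}(ii),
\[
\norm{\delta Y}^p_{\cS^p}\le C_p\,E\!\left[\int_0^T|\delta Y_s|^{p-1}|\delta f_s|\,ds\right],
\]
and again estimate $|\delta f_s|\le 2g_s+K(|Y^n_s|+|Y^m_s|)+K(|Z^n_s|+|Z^m_s|)$. The contributions carrying $g_s,|Y^n_s|,|Y^m_s|$ are harmless: each integrand is dominated by $D_s^{p-1}(2g_s+2K(|U_s|+|Y^K_s|))$, which belongs to $L^1(ds\times dP)$ since $p-1<p$ and $D,U,Y^K\in\cS^p$ while $g$ obeys (H1); as $\delta Y_s\to0$ pointwise, dominated convergence drives these terms to $0$ as $n,m\to\infty$.

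The hard part will be the remaining term $E[\int_0^T|\delta Y_s|^{p-1}(|Z^n_s|+|Z^m_s|)\,ds]$, and this is precisely where $p>2$ departs from $1<p\le2$. A H\"older balancing that spends $\norm{\delta Y}_{\cS^p}$ at the exact power $p$ and $\norm{Z^n}_{\cH^p}$ at the full power $p/2$ leads to a bound of the form $C\,T^{1/2}\norm{\delta Y}^{p-1}_{\cS^p}\norm{Z^n}_{\cH^p}$; inserted above it only yields $\norm{\delta Y}^p_{\cS^p}\le C_p\,o(1)+C\norm{\delta Y}^{p-1}_{\cS^p}$, which bounds $\norm{\delta Y}_{\cS^p}$ but does not send it to $0$. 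Overcoming this circularity is the crux: one must extract genuine smallness from the $Z$-part by combining the pointwise decay $\delta Y_s\to0$ with the nonnegative generator gap $f_n-f_m\ge0$ of Lemma~\ref{le3.1}-ii). Concretely, I would truncate $|Z^n_s|$ at a level $L$; the truncated part is at most $L\,E[\int_0^T|\delta Y_s|^{p-1}\,ds]\to0$ by dominated convergence for each fixed $L$, so everything hinges on making the tail $\{|Z^n_s|>L\}$ uniformly small. Since the hypotheses furnish only a uniform $\cH^p$-bound, hence boundedness but not a priori uniform integrability of $(\int_0^T|Z^n_s|^2ds)^{p/2}$, establishing this uniform tail control is the delicate step that the case $p>2$ genuinely requires; for $1<p\le2$ the exponents fall into place and the argument closes along the lines of \cite{Chen}.
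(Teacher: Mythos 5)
Your scaffolding coincides with the paper's: the monotone limit $Y$ under the envelope bound $|Y^n_\cdot|\vee|Y_\cdot|\le M\in L^p$, the uniform bounds $\sup_{n\ge K}\norm{Y^n}_{\cS^p}<\infty$ and, via Proposition~\ref{APE}-(i), $\sup_{n\ge K}\norm{Z^n}_{\cH^p}<\infty$, dominated convergence for the $g_s$- and $Y$-contributions, and the observation that \eqref{A2} makes the $\cH^p$-convergence of $(Z^n)$ an automatic consequence of the $\cS^p$-convergence of $(Y^n)$ — that last reduction is carried out in the paper exactly as you describe. But the proof is not complete: the term $E[\int_0^T|\delta Y_s|^{p-1}(|Z^n_s|+|Z^m_s|)ds]$, which you correctly single out as the crux, is left open. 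Your truncation strategy cannot be closed from the hypotheses: after cutting $|Z^n_s|$ at level $L$, controlling the tail uniformly in $n$ amounts to uniform integrability of $\big(\int_0^T|Z^n_s|^2ds\big)^{p/2}$, and a uniform $\cH^p$-bound yields boundedness only — as you yourself concede. So the argument stalls precisely where the case $p>2$ needs its new idea.

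The paper closes this gap with a different H\"older/Young balancing that never requires tail control of $Z^n$. With $F_{n,m}(s):=|Y^n_s|+|Z^n_s|+|Y^m_s|+|Z^m_s|$, one applies Cauchy--Schwarz in $ds$, splits $|\delta Y_s|^{2p-2}$ as $\sup_t|\delta Y_t|^{p}\cdot|\delta Y_s|^{p-2}$, and uses $2ab\le\varepsilon a^2+\varepsilon^{-1}b^2$ to get
\begin{align*}
KE\left[\int_0^T|\delta Y_s|^{p-1}F_{n,m}(s)ds\right]
\le \varepsilon\norm{\delta Y}^p_{\cS^p}
+\varepsilon^{-1}K^2E\left[\int_0^T|\delta Y_s|^{p-2}ds\int_0^T\{F_{n,m}(s)\}^2ds\right];
\end{align*}
then H\"older with conjugate exponents $\frac{p}{p-2}$ and $\frac{p}{2}$ — this is exactly where $p>2$ enters — bounds the last expectation by
\[
T^{\frac{2}{p}}\left\{E\left[\int_0^T|\delta Y_s|^pds\right]\right\}^{1-\frac{2}{p}}\left\{E\left[\left(\int_0^T\{F_{n,m}(s)\}^2ds\right)^{\frac{p}{2}}\right]\right\}^{\frac{2}{p}}.
\]
The first factor tends to $0$ by dominated convergence (your \eqref{eq3.3}-type step), the second is uniformly bounded in $n,m$ by the $\cS^p$- and $\cH^p$-bounds you already established, and choosing $\varepsilon$ with $C_p\varepsilon=1/2$ absorbs $\varepsilon\norm{\delta Y}^p_{\cS^p}$ into the left-hand side of \eqref{A1}. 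In other words, the smallness is extracted not from uniform tail control of $Z^n$ but from spending only the power $p/2$ of $\sup_t|\delta Y_t|$ on the absorbable term and routing the leftover $|\delta Y|^{p-2}$ through $E[\int_0^T|\delta Y_s|^pds]\to0$; the nonnegativity of the generator gap, which you hoped to exploit, is never needed beyond the monotonicity of $(Y^n)$ already used.
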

\begin{proof}
The assertion for $1<p\leq 2$ can be proved in the same manner as \cite[Lemma 4]{Chen}. 
Thus, we give the proof only for the case $p>2$. 

Since $(Y^n)$ is non-decreasing, it admits the limit process $Y$. 
By \eqref{eq3.2}, it follows that
\begin{align*}
Y^{\lceil K \rceil} \leq Y^n, Y \leq U, \qquad n\geq K, 
\end{align*}
where $\lceil \cdot \rceil$ represents the ceiling function. 
Thus, we have
\begin{align}\label{eqN3.4}
|Y^n_{\cdot}|\leq M, \quad |Y_{\cdot}|\leq M, \qquad n\geq K, 
\end{align}
where $\sup_{0\leq t \leq T}|Y^{\lceil K \rceil}_t| \vee \sup_{0\leq t \leq T}|U_t|=:M \in L^p$. 
Then, by the dominated convergence theorem, it follows that
\begin{align*}
&E\left[ \int_0^T |Y^n_s-Y_s|^{p-1}g_sds \right] \to 0, \quad E\left[ \int_0^T |Y^n_s-Y_s|^pds \right] \to 0,
\end{align*}
and thus, we get
\begin{multline}
E\left[ \int_0^T |Y^n_s-Y^m_s|^{p-1}g_sds \right] \to 0, \quad E\left[ \int_0^T |Y^n_s-Y^m_s|^pds \right] \to 0, \\\mbox{as ~$n,m\to\infty$}. \label{eq3.3}
\end{multline}

By Proposition \ref{APE}-(ii), we have
\begin{align}
& \norm{Y^n-Y^m}^p_{\cS^p} \nonumber\\
& \hspace{2em}\leq C_pE\left[ \int_0^T |Y^n_s-Y^m_s|^{p-1}|f_n(s,Y^n_s,Z^n_s)-f_m(s,Y^m_s,Z^m_s)|ds \right], \label{A1}\\
& \norm{Z^n-Z^m}^p_{\cH^p} \nonumber\\
& \hspace{2em}\leq C_p \left\{ E\left[ \left( \int_0^T |Y^n_s-Y^m_s||f_n(s,Y^n_s,Z^n_s)-f_m(s,Y^m_s,Z^m_s)|ds \right)^\frac{p}{2} \right]\right. \nonumber\\
& \hspace{5.5em}\left.\vphantom{\left( \int_0^T |Y^n_s-Y^m_s||f_n(s,Y^n_s,Z^n_s)-f_m(s,Y^m_s,Z^m_s)|ds \right)^\frac{p}{2}}+\norm{Y^n-Y^m}^p_{\cS^p} \right\}. \label{A2}
\end{align}
We first estimate the right hand side of \eqref{A1}. 
By Lemma \ref{le3.1}-i), we get
\begin{multline}
E\left[ \int_0^T |Y^n_s-Y^m_s|^{p-1}|f_n(s,Y^n_s,Z^n_s)-f_m(s,Y^m_s,Z^m_s)|ds \right] \\
\leq 2E\left[ \int_0^T |Y^n_s-Y^m_s|^{p-1}g_sds \right]+ K E\left[ \int_0^T |Y^n_s-Y^m_s|^{p-1} F_{n,m}(s)ds \right], \label{eq3.6}
\end{multline}
where $F_{n,m}(s):=|Y^n_s|+|Z^n_s|+|Y^m_s|+|Z^m_s|$.
By \eqref{eq3.3}, we know the first term of \eqref{eq3.6} converges to zero. 
Thus, we estimate the second term of this.
By the H\"{o}lder inequality and \eqref{epsilon}, we have
\begin{align}
& K E\left[ \int_0^T |Y^n_s-Y^m_s|^{p-1} F_{n,m}(s)ds \right] \nonumber\\
&\leq K E\left[ \left( \int_0^T |Y^n_s-Y^m_s|^{2p-2}ds \right)^\frac{1}{2} \left( \int_0^T \{ F_{n,m}(s) \}^2ds \right)^\frac{1}{2} \right] \nonumber\\
&\leq K E\left[ \sup_{0 \leq t \leq T}|Y^n_t-Y^m_t|^\frac{p}{2} \left( \int_0^T |Y^n_s-Y^m_s|^{p-2}ds \right)^\frac{1}{2} \left( \int_0^T \{ F_{n,m}(s) \}^2ds \right)^\frac{1}{2} \right] \nonumber\\
&\leq \varepsilon E\left[ \sup_{0 \leq t \leq T}|Y^n_t-Y^m_t|^p \right]+\varepsilon^{-1}K^2 E\left[ \int_0^T |Y^n_s-Y^m_s|^{p-2}ds \int_0^T \{ F_{n,m}(s) \}^2ds \right] \nonumber\\
&\leq \varepsilon \norm{Y^n-Y^m}^p_{\cS^p} \nonumber\\
&\hspace{3em}+\varepsilon^{-1} K^2 \left\{ E\left[ \left( \int_0^T |Y^n_s-Y^m_s|^{p-2}ds \right)^\frac{p}{p-2} \right] \right\}^{1-\frac{2}{p}} \nonumber\\
& \hspace{12em}\times\left\{ E\left[ \left( \int_0^T \{F_{n,m}(s)\}^2ds \right)^\frac{p}{2} \right] \right\}^\frac{2}{p} \nonumber\\
&\leq \varepsilon \norm{Y^n-Y^m}^p_{\cS^p} \nonumber\\
&\hspace{3em}+\varepsilon^{-1}K^2T^\frac{2}{p}\left\{ E\left[ \int_0^T |Y^n_s-Y^m_s|^pds \right] \right\}^{1-\frac{2}{p}} \nonumber\\
&\hspace{12em}\times\left\{ E\left[ \left( \int_0^T \{F_{n,m}(s)\}^2ds \right)^\frac{p}{2}\right] \right\}^\frac{2}{p}. \label{eq3.7}
\end{align}
By \eqref{eqN3.4}, we have
\begin{align*}
\sup_{n \geq K}\norm{Y^n}_{\cS^p}<\infty. 
\end{align*}
Thus, by Proposition \ref{APE}-(i), we see that
\begin{align*}
\sup_{n,m\geq K}E\left[ \left( \int_0^T \{F_{n,m}(s)\}^2ds \right)^\frac{p}{2} \right]<\infty.
\end{align*}
Letting $\varepsilon$ such that $C_p\varepsilon=1/2$, by \eqref{eq3.3}, \eqref{A1}, \eqref{eq3.6} and \eqref{eq3.7}, it follows that
\begin{align*}
\norm{Y^n-Y^m}_{\cS^p} \to 0, \quad \mbox{as ~$n,m\to\infty$}.
\end{align*}

By Lemma \ref{le3.1}-i) and the Schwartz inequality, we get the following estimation for the first term of the right hand side of \eqref{A2}:
\begin{align*}
&E\left[ \left( \int_0^T |Y^n_s-Y^m_s||f_n(s,Y^n_s,Z^n_s)-f_m(s,Y^m_s,Z^m_s)|ds \right)^\frac{p}{2} \right] \\
&\leq C \left\{ E\left[ \left( \int_0^T |Y^n_s-Y^m_s|g_s ds \right)^\frac{p}{2} \right] + E\left[ \left( \int_0^T |Y^n_s-Y^m_s| F_{n,m}(s)ds \right)^\frac{p}{2} \right] \right\} \\
&\leq C \left[ \norm{Y^n-Y^m}_{\cS^p}^\frac{p}{2} \left\{ E\left[ \left( \int_0^T g_sds \right)^p \right] \right\}^\frac{1}{2} \right. \\
&\hspace{4em} \left. \vphantom{\left\{ E\left[ \left( \int_0^T g_sds \right)^p \right] \right\}^\frac{1}{2}} + T^\frac{p}{4}\norm{Y^n-Y^m}_{\cS^p}^\frac{p}{2} \left\{ E\left[ \left( \int_0^T\{F_{n,m}(s)\}^2 \right)^\frac{p}{2} \right] \right\}^\frac{1}{2} \right], 
\end{align*}
where $C$ is a positive constant depending only on $p$. 
Since $\norm{Y^n-Y^m}_{\cS^p}\to 0$, we obtain $\norm{Z^n-Z^m}_{\cH^p}\to 0$.
\end{proof}

By Proposition \ref{pr3.3}, we denote by $(Y,Z)$ the limit of $(Y^n,Z^n)$ in $\cS^p\times\cH^p$.
\begin{thm}
$(Y,Z)$ is an $L^p$ solution to the BSDE \eqref{BSDE}.
\end{thm}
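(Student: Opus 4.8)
The plan is to pass to the limit in the approximating equation \eqref{ABSDE} term by term, exploiting the convergence $(Y^n,Z^n)\to(Y,Z)$ in $\cS^p\times\cH^p$ supplied by Theorem \ref{pr3.3}. The membership $(Y,Z)\in\cS^p\times\cH^p$ is automatic, since $(Y,Z)$ is by definition the limit in these complete spaces, and $\int_0^T|Z_s|^2\,ds<\infty$ a.s.\ follows from $Z\in\cH^p$. Hence it remains to show that, for each fixed $t$, every term of \eqref{ABSDE} converges in probability to the corresponding term of \eqref{BSDE}, and that $\int_0^T|f(s,Y_s,Z_s)|\,ds<\infty$ a.s.

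The terminal value contributes nothing, and the stochastic integral is handled by the BDG inequality, which gives
\[
E\Big[\sup_{0\le t\le T}\Big|\int_t^T(Z^n_s-Z_s)\cdot dW_s\Big|^p\Big]\le C_p\,\norm{Z^n-Z}^p_{\cH^p}\longrightarrow 0,
\]
so that $\int_t^T Z^n_s\cdot dW_s\to\int_t^T Z_s\cdot dW_s$ for every $t$. The substance of the proof is the drift term. I would first establish pointwise convergence of the integrands: since $(Y^n)$ increases to $Y$ one has $Y^n_s\to Y_s$ for every $(s,\omega)$, while from $Z^n\to Z$ in $\cH^p$ I would extract a subsequence $(n_k)$ with $\int_0^T|Z^{n_k}_s-Z_s|^2\,ds\to 0$ a.s., hence with $Z^{n_k}_s\to Z_s$ for a.e.\ $(s,\omega)$. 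Lemma \ref{le3.1}-iv), which passes to subsequences since it concerns a single convergent sequence, then yields $f_{n_k}(s,Y^{n_k}_s,Z^{n_k}_s)\to f(s,Y_s,Z_s)$ for a.e.\ $(s,\omega)$.

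The main obstacle is that the linear growth in $z$ rules out a fixed integrable majorant: Lemma \ref{le3.1}-i) together with \eqref{eqN3.4} yields only $|f_{n_k}(s,Y^{n_k}_s,Z^{n_k}_s)|\le g_s+K(M+|Z^{n_k}_s|)=:G_{n_k}(s)$, a bound that varies with $k$. I would overcome this with the generalized dominated convergence theorem: along $(n_k)$ one also has $G_{n_k}\to g_{\cdot}+K(M+|Z_{\cdot}|)=:G$ a.e., and because $L^2([0,T])$-convergence forces $\int_0^T|Z^{n_k}_s|\,ds\to\int_0^T|Z_s|\,ds$, one obtains $\int_0^T G_{n_k}\,ds\to\int_0^T G\,ds<\infty$ a.s., whence $\int_t^T f_{n_k}(s,Y^{n_k}_s,Z^{n_k}_s)\,ds\to\int_t^T f(s,Y_s,Z_s)\,ds$ a.s.\ along $(n_k)$. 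Applying this to an arbitrary subsequence of the original sequence promotes the convergence to convergence in probability for the full sequence. Taking limits in \eqref{ABSDE} for each fixed $t$ then produces the identity \eqref{BSDE} a.s.; since both sides are a.s.\ continuous in $t$, it holds for all $t$ simultaneously. Finally, $\int_0^T|f(s,Y_s,Z_s)|\,ds<\infty$ a.s.\ follows from $|f(s,Y_s,Z_s)|\le g_s+K(M+|Z_s|)$ together with $\int_0^T g_s\,ds<\infty$ and $\int_0^T|Z_s|\,ds<\infty$ a.s., so that $(Y,Z)$ is indeed an $L^p$ solution to \eqref{BSDE}.
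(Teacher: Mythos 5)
Your proof is correct in substance and shares the paper's skeleton---termwise passage to the limit in \eqref{ABSDE}, the BDG inequality for the stochastic integral, extraction of an a.e.-convergent subsequence, and Lemma \ref{le3.1}-iv) for the integrand---but you handle the drift term by a genuinely different device. The paper dominates $|f_n(\cdot,Y^n_\cdot,Z^n_\cdot)|\leq g_\cdot+K(|Y^n_\cdot|+|Z^n_\cdot|)$, observes that $Y^n\to Y$ and $Z^n\to Z$ in $L^1(dt\times dP)$ by H\"{o}lder, concludes that the family $(f_n(\cdot,Y^n_\cdot,Z^n_\cdot))_{n\geq K}$ is uniformly integrable with respect to $\frac{dt}{T}\times dP$, and obtains $\int_0^T|f_n(s,Y^n_s,Z^n_s)-f(s,Y_s,Z_s)|\,ds\to0$ in $L^1$ via Vitali's theorem---a stronger conclusion than your convergence in probability, with no sub-subsequence bookkeeping. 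You instead argue pathwise: along a subsequence $\int_0^T|Z^{n_k}_s-Z_s|^2\,ds\to0$ a.s., you apply Pratt's generalized dominated convergence theorem on $([0,T],ds)$ for each fixed $\omega$ with the varying majorants $G_{n_k}=g_\cdot+K(M+|Z^{n_k}_\cdot|)$ (using $\int_0^T|Z^{n_k}_s|\,ds\to\int_0^T|Z_s|\,ds$, which does follow by Cauchy--Schwarz), and then use the sub-subsequence principle to recover convergence in probability for the full sequence; this avoids uniform integrability entirely, at the cost of fussier limit juggling, and convergence in probability suffices to identify \eqref{BSDE} a.s.\ at each fixed $t$, with continuity upgrading to all $t$ as you say. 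One step needs tightening: from $\int_0^T|Z^{n_k}_s-Z_s|^2\,ds\to0$ a.s.\ you write ``hence $Z^{n_k}_s\to Z_s$ for a.e.\ $(s,\omega)$'', but $L^2([0,T])$-convergence for a.e.\ fixed $\omega$ yields only convergence in $ds$-measure, with a.e.-$s$ convergence along an $\omega$-dependent further subsequence. The clean repair is to note $E[\int_0^T(|Z^{n_k}_s-Z_s|^2\wedge1)\,ds]\to0$ by bounded convergence, i.e.\ $Z^{n_k}\to Z$ in $dt\times dP$-measure, and then extract one further global subsequence converging $dt\times dP$-a.e.---precisely the paper's ``by choosing a subsequence if necessary''---which costs you nothing since your whole argument already runs through the sub-subsequence principle.
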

\begin{proof}
It is already seen that
\begin{align*}
\norm{Y^n-Y}_{\cS^p}\to 0, \quad \mbox{as~~$n\to\infty$}. 
\end{align*}
By the BDG inequality, we have
\begin{align*}
\sup_{0 \leq t \leq T}\left| \int_0^t (Z^n_s-Z_s)\cdot dW_s \right| \to 0 \quad \mbox{in $L^p$, \quad as~~$n\to\infty$}. 
\end{align*}
Since $\norm{Y^n-Y}_{\cS^p}\to 0, ~\norm{Z^n-Z}_{\cH^p}\to 0$ as $n\to\infty$, we may assume 
\begin{align*}
& Y^n_t \to Y_t, \quad 0\leq t \leq T \quad \as, \\
& Z^n \to Z, \quad \mbox{$dt\times dP$-\cae}
\end{align*}
by choosing a subsequence if necessary. 
Thus, by Lemma \ref{le3.1}-iv), we get
\begin{align*}
f_n(t,Y^n_t,Z^n_t)\to f(t,Y_t,Z_t), \mbox{$\quad dt\times dP$-\cae}.
\end{align*}
Now, by Lemma \ref{le3.1}-i), we have
\begin{align*}
|f_n(t,Y^n_t,Z^n_t)| &\leq g_t+K(|Y^n_t|+|Z^n_t|). 
\end{align*}
By the H\"{o}lder inequality, $Y^n\to Y,~Z^n\to Z$ in $L^1$ with respect to $dt\times dP$, 
and then, we see that $(Y^n)_{n\geq K}$ and $(Z^n)_{n\geq K}$ are uniformly integrable with respect to $\frac{dt}{T}\times dP$. 
Hence, $(f_n(\cdot,Y^n_\cdot,Z^n_\cdot))_{n\geq K}$ is uniformly integrable with respect to $\frac{dt}{T}\times dP$. 
Thus, we get
\begin{align*}
\int_0^T |f_n(s,Y^n_s,Z^n_s)-f(s,Y_s,Z_s)|ds \to 0 \quad \mbox{in $L^1$}. 
\end{align*}
Therefore, letting $n\to\infty$ in \eqref{ABSDE}, we obtain \eqref{BSDE}.
\end{proof}


\begin{thebibliography}{9}
 \bibitem{Pardoux} Pardoux, E., and Peng, S. (1990). Adapted solution of a backward stochastic differential equation. {\it Systems Control Lett.}, {\bf 14}, 55-61.
 \bibitem{Karoui} El Karoui, N., Peng, S., and Quenez, M.C. (1997). Backward stochastic differential equations in finance. {\it Math. Finance}, {\bf 7}, 1-71.
 \bibitem{Lepeltier} Lepeltier, J.P., and San Martin, J. (1997). Backward stochastic differential equations with continuous coefficient. {\it Statist. Probab. Lett.}, {\bf 32}, 425-430.
 \bibitem{Briand} Briand, Ph., Delyon, B., Hu, Y., Pardoux, E., and Stoica, L. (2003). $L^p$ solutions of backward stochastic differential equations. {\it Stochastic Process. Appl.}, {\bf 108}, 109-129.
 \bibitem{Chen} Chen, S. (2010). $L^p$ solutions of one-dimensional backward stochastic differential equations with continuous coefficients. {\it Stoch. Anal. Appl.}, {\bf 28}, 820-841.
 \bibitem{Fan} Fan, S.J., and Jiang, L. $L^p~(p>1)$ solutions for one-dimensional BSDEs with linear-growth generators. To appear in {\it J. Appl. Math. Comput.} DOI: 10.1007/s12190-011-0479-y
 \bibitem{Karatzas_Shreve} Karatzas, I., and Shreve, S. (1988). {\it Brownian motion and stochastic calculus}. Springer, New York.
\end{thebibliography}
\end{document}